%
%
%
%

%
%
%
\documentclass{amsart}
\usepackage{graphicx}
\usepackage{amscd}
\usepackage{amsmath}
\usepackage{amsfonts}
\usepackage{amssymb}
\theoremstyle{plain}
\newtheorem{theorem}{Theorem}
\newtheorem{corollary}[theorem]{Corollary}
\newtheorem{lemma}[theorem]{Lemma}

\theoremstyle{definition}

\newtheorem{definition}[theorem]{Definition}

\newtheorem{remark}[theorem]{Remark}
\theoremstyle{remark}


\begin{document}
\title{Cancellation elements in multiplicative lattices}

\author{Tiberiu Dumitrescu}

\address{Facultatea de Matematica si Informatica,University of Bucharest,14 A\-ca\-de\-mi\-ei Str., Bucharest, RO 010014,Romania}
\email{tiberiu\_dumitrescu2003@yahoo.com}

\thanks{2020 Mathematics Subject Classification: Primary 06B23, Secondary 13F05.}
\thanks{Key words and phrases: multiplicative lattice, cancellation element, cancellation ideal, Abstract Ideal Theory.}

\begin{abstract}\noindent
We extend to multiplicative lattices a theorem of Anderson and Roitman characterizing the cancellation ideals of a commutative ring.
\end{abstract}

\maketitle

Anderson and Roitman \cite[Theorem]{AR} showed that an ideal $Q$ of a commutative ring $R$  is a {\em cancellation ideal} (i.e. $QJ=QH$ with $J,H$ ideals implies $J=H$) iff all localizations of $Q$ at the maximal ideals of $R$  are principal   regular ideals. Subsequently Chang \cite[Theorem 2.3]{C} extended this result in the context of star operations on integral domains.
The aim of this short note is to give an Abstract Ideal Theory version of Anderson-Roitman theorem which covers also Chang's result (see 
 Theorem \ref{7} and {Remark} \ref{11}).

We recall some basic facts of Abstract Ideal Theory (for details see \cite{A}, \cite{AJ}, \cite{AJ1}, \cite{D} and    \cite{JJ}).
 A {\em (multiplicative) lattice} is a complete lattice  $(L,\leq)$ (with bottom element $0$ and top element $1$) which is also  a commutative monoid with identity $1$ (the top element) such that
 $$a( \bigvee_\alpha b_\alpha) = \bigvee_\alpha (ab_\alpha)  \mbox{ \ for all } a,b_\alpha\in L.$$
Lattice $L$ is {\em modular} if $a,b,c\in L$ and $a\geq b$ implies 
$a\wedge (b\vee c)= b \vee (a\wedge c)$.
An element $c\in L$ is {\em compact} if 
 \begin{center}
  $c\leq \bigvee S$ with $S\subseteq L$  implies $c\leq \bigvee T$ for some finite subset $T\subseteq S.$
\end{center}
\noindent
Suppose that $1$ is compact. Then every  element $a\neq 1$ is smaller than   some {\em maximal} element $m$ (i.e. maximal in $L-\{1\}$). 
An element $p\in L-\{1\}$ is {\em prime} if  
$$ab \leq p \mbox{ with } a,b \in L \mbox{ implies } a \leq p \mbox{ or } b\leq p.$$   
Every maximal element is prime. The {\em spectrum} of $L$ is the set of all prime elements of $L$.
For  $a,b\in L$,   $(a : b)$ is defined as $\bigvee \{c \in L\ | \ bc \leq a\}$.
An element $x\in L$ is {\em principal} if it is both 
{\em meet principal}, i.e. satisfies
$$a \wedge bx = ((a : x) \wedge b) x  \ \ \  \mbox{ for all }  a, b \in L $$
and 
{\em join principal}, i.e. satisfies
$$a \vee (b : x) = ((ax \vee b) : x) \ \ \mbox{ for all }  a, b \in L .$$
If $x$ and $y$ are principal elements, then so is $xy$. 
An element $Q\in L$ is a {\em cancellation element} if $Qa=Qb$ with $a,b\in L$ implies $a=b$. In this case, 
$Qc \leq Qd$ with $c,d\in L$ implies $c \leq d$.
Say that a principal element $c\in L$ is {\em regular} if $(0:c)=0$. Clearly such an element is a cancellation element.

$L$ is a {\em $C$-lattice} if $1$ is compact, the set of compact elements  is closed under  multiplication  and  every element is a join of compact elements.
  In a $C$-lattice every principal element is compact.
 The $C$-lattices have a   localization theory similar to commutative rings.   Let $L$ be a $C$-lattice and $S$ a multiplicatively closed subset of $L$ consisting of compact elements.
The {\em localization} of $L$ at $S$ is
$$L_S = \{ a_S\ |\ a \in L \}
\mbox{ where }
a_S = \bigvee \{(a:s) \ |\  s \in S \}.$$  
Then $L_S$  is   a lattice under the following operations 
$$A\mapsto  (\bigvee A)_S, \ 
A\mapsto  \bigwedge A, \  
(a,b)\mapsto (ab)_S
\mbox{ for all }A\subseteq L_S,\ a,b\in L_S. $$
If $a,b\in L$ with $b$ compact, then $(a:b)_S=(a_S:b_S)$.
The localization $L_p$ of $L$ at a prime element $p$ is 
$L_T$ where $T= \{ c \mbox{ compact } \ |\ c\not\leq  p\}$.
For $a,b\in L$, we have $a=b$ iff $a_m=b_m$ for each maximal element $m$.
A compact element $x\in L$ is principal iff $x_m$ is principal in $L_m$ for each maximal element $m\in L$.

A subset $G$ of $L$ {\em generates} $L$ if every element of $L$ is a join of elements of $G$. 
A  modular  C-lattice generated by a set of principal elements is called an {\em r-lattice} (cf. \cite{A}). {\em This is  our working concept.} 
A typical example of an r-lattice is the ideal lattice of a commutative ring.

The key   step \cite[Lemma]{AR} in the proof of \cite[Theorem]{AR} 
is a modification of Kaplansky's argument  \cite[Theorem 287]{K}; it uses the  ideal equalities
$$(x^2,y^2)= (x^2,x^2+y^2)= (y^2,x^2+y^2)$$ 
where $x,y$ are elements of a commutative ring.
This fact motivates our ad-hoc definition below.

\begin{definition}\label{10}
Say that an r-lattice $L$ {\em has property  delta} if $L$ is generated by a set $\Delta$ of principal elements having the following property: 

if $x,y\in \Delta$, there exists some  $\delta\in \Delta$ (depending on $x,y$) such that
$$x^2\vee y^2=x^2\vee \delta = y^2 \vee \delta.$$
\end{definition}
 We state our main result which extends \cite[Theorem]{AR} to multiplicative lattices.

\begin{theorem} \label{7} 
Let $L$ be an r-lattice having property delta.
An element  $Q$ of $L$  is   a  cancellation element iff $Q_m$ is a principal regular element of $L_m$ for each maximal element $m$.
\end{theorem}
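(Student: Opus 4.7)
My plan is to prove the two directions separately.

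\emph{Sufficiency} ($\Leftarrow$) is the straightforward direction. Assume each $Q_m$ is principal regular, hence a cancellation element, in $L_m$. Given $Qa=Qb$ in $L$, I localize at an arbitrary maximal $m$: since the map $L\to L_m$ preserves products, the equation becomes $Q_m\cdot a_m = Q_m\cdot b_m$, and cancellation in $L_m$ gives $a_m=b_m$; the local-global equality criterion then yields $a=b$.

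For \emph{necessity} ($\Rightarrow$), assume $Q$ is cancellation and fix a maximal $m$. First, $(0:Q)=0$: if $Qx\leq 0$, then $Qx=Q\cdot 0$, so $x=0$. The central task is to show that $Q_m$ is principal in $L_m$; once this is done, $Q_m$ will equal $(x_{\lambda_0})_m$ for some $x_{\lambda_0}\in\Delta$, and regularity of $Q_m$ will follow from a short argument that transfers $(0:Q)=0$ to $L_m$ using the compactness of the principal element $x_{\lambda_0}$ and the identity $(0:x_{\lambda_0})_m=(0_m:(x_{\lambda_0})_m)$ recalled in the excerpt.

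To prove principality of $Q_m$, I write $Q=\bigvee_\lambda x_\lambda$ with $x_\lambda\in\Delta$ and establish the \textbf{key lemma}, the lattice analog of the Anderson--Roitman lemma: \emph{for any $x,y\in\Delta$ with $x,y\leq Q$, the localizations $x_m,y_m\in L_m$ are comparable.} Granted this, the principal joinands $(x_\lambda)_m$ of $Q_m$ form a chain in $L_m$, and working with compact sub-elements of $Q$ one identifies $Q_m$ with a single joinand, making it principal. For the key lemma, apply property delta to produce $\delta\in\Delta$ with
$$x^2\vee y^2 \;=\; x^2\vee\delta \;=\; y^2\vee\delta,$$
so in particular $\delta\leq Q^2$. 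Using the meet- and join-principal identities for the principal elements $x,y,\delta$ together with modularity of $L$, one translates these equalities into cancellative identities of the form $Q^2\cdot u=x^2\cdot v$ and $Q^2\cdot u'=y^2\cdot v'$; cancellation of $Q^2$ (inherited from $Q$) combined with the local dichotomy at $m$ then forces either $x_m\leq y_m$ or $y_m\leq x_m$, paralleling Kaplansky's argument behind \cite[Lemma]{AR}.

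The main obstacle is exactly this lattice translation of the Kaplansky step. In a local ring the ``unit or element of the maximal ideal'' dichotomy is crisp, whereas its analog in $L_m$---a compact element is either $\leq m_m$ or joins with $m_m$ to give $1$---is weaker and requires careful joint use of modularity together with the join-principal formula $a\vee(b:\delta)=((a\delta\vee b):\delta)$ and its meet-principal companion applied to the principal element $\delta$ furnished by property delta. The parallel with \cite{AR} provides the blueprint for the bookkeeping.
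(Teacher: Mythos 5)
Your sufficiency direction matches the paper's. For necessity, however, your \textbf{key lemma} --- that any two $x,y\in\Delta$ with $x,y\leq Q$ have comparable localizations $x_m,y_m$ --- is false, and this is precisely the point where the Anderson--Roitman argument says something weaker and more careful. Take $L$ to be the ideal lattice of $R=k[[u,v]]$, $\Delta$ the set of principal ideals, $Q=R=1$ (the top element is trivially a cancellation element) and $m=(u,v)$; then $x=(u)$ and $y=(v)$ lie in $\Delta$, satisfy $x,y\leq Q$, and even belong to the generating set $R=(1,u,v)$, yet $(u)_m$ and $(v)_m$ are incomparable in $L_m=L$. What is actually true (the paper's Lemma \ref{4}, the analog of \cite[Lemma]{AR}) is a \emph{redundancy} statement: if $Q=x\vee y\vee a$ with $m(x\vee y)\leq a$, then $Q=x\vee a$ or $Q=y\vee a$; one generator can be dropped modulo the rest, which does not make $x_m$ and $y_m$ comparable. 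Consequently your plan ``the joinands $(x_\lambda)_m$ form a chain, hence $Q_m$ equals a single joinand'' fails at both steps: there is no chain, and even the join of a chain of principal elements need not be attained at one of its terms without a further appeal to cancellation.

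The paper routes around both problems as follows: (i) a Zorn's lemma argument (Lemma \ref{3}) produces $B\subseteq\Delta$ with $Q=\bigvee B\vee mQ$, minimal \emph{modulo $mQ$} (working modulo $mQ$ is essential, both because $Q$ itself may have no irredundant generating set and because it supplies the hypothesis $m(x\vee y)\leq a$ of Lemma \ref{4}); (ii) if $B$ had two elements, Lemma \ref{4} with $a=\bigvee(B-\{x,y\})\vee mQ$ would contradict minimality, so $B=\{b\}$; (iii) a Nakayama-type lemma (Lemma \ref{2}), which again uses cancellation of $Q$, converts $Q=b\vee mQ$ into $Q_m=b_m$. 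Your regularity step has a similar gap: $(0:Q)=0$ does not localize, since $Q$ need not be compact, and the inequality $(0:x_{\lambda_0})\geq(0:Q)$ points the wrong way; the paper instead takes a principal $c$ with $cQ_m=0$, verifies $cQ=mcQ$ locally, cancels $Q$ to get $c=mc$, hence $1=m\vee(0:c)$ and $c_m=0$ (Lemma \ref{6}). So your overall architecture is right, but the central lemma must be the redundancy dichotomy rather than comparability, and the reduction modulo $mQ$ together with the two auxiliary cancellation arguments cannot be skipped.
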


The proof is preceded by a string of five lemmas. We basically adapt the arguments in \cite{AR} to lattices.

\begin{lemma} \label{1} 
Let $L$ be an r-lattice and  $a,b,x\in L$ such that $x$ is principal   and $a\leq x\vee b$. Then $a\vee b = xa'\vee b$ for some $a'\in L$. 
\end{lemma}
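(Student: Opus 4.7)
The plan is to take $a' = ((a\vee b):x)$ and obtain the identity by combining modularity with the meet-principal property of $x$.

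First I would observe that the hypothesis $a\leq x\vee b$ is equivalent to $a\vee b\leq x\vee b$. Since $L$ is an r-lattice it is modular, so applying the modular law with $A := a\vee b \geq B := b$ and $C := x$ gives
\[
a\vee b \;=\; (a\vee b)\wedge (x\vee b) \;=\; b\vee \bigl((a\vee b)\wedge x\bigr).
\]

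Next I would unpack $(a\vee b)\wedge x$ using that $x$ is principal, hence meet principal. Applying the meet principal identity $u\wedge vx = ((u:x)\wedge v)x$ with $u = a\vee b$ and $v = 1$ (and noting $(a\vee b:x)\wedge 1 = (a\vee b:x)$) yields
\[
(a\vee b)\wedge x \;=\; ((a\vee b):x)\,x \;=\; xa',
\]
where I set $a' := ((a\vee b):x)$. Substituting this into the previous display gives $a\vee b = b\vee xa' = xa'\vee b$, which is the desired equality.

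No real obstacle is anticipated; the whole content is choosing $a'$ correctly, and the two ingredients (modularity in the first step, meet principality in the second) are each used exactly once. The only small thing to double-check is that the restriction $b\leq a\vee b$ (needed to apply the modular law in the direction stated in the paper) holds, which is automatic.
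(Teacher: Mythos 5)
Your proof is correct and follows essentially the same route as the paper: modularity gives $a\vee b = ((a\vee b)\wedge x)\vee b$, and meet principality of $x$ identifies $(a\vee b)\wedge x$ as $xa'$; your only addition is to name $a' = ((a\vee b):x)$ explicitly via the meet-principal identity with $v=1$, which the paper leaves implicit.
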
 
\begin{proof} 
As $L$ is  modular and $x$ is meet principal we have
$$ a\vee b =  (a\vee b) \wedge (x\vee b) = ((a\vee b) \wedge x)\vee b = xa'\vee b  \  \mbox{ for some } a'\in L.$$
\end{proof}

\begin{lemma} \label{2} 
Let $L$ be an r-lattice and  $Q,a,m\in L$ such that $Q$ is a  cancellation element, $m$ is maximal element and $Q=a\vee mQ$.
Then $Q_m = a_m$.
\end{lemma}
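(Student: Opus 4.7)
The plan is to reduce the equality $Q_m=a_m$ to the colon statement $(a:Q)\not\le m$, and to prove the latter by exploiting the cancellation hypothesis on $Q$.

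\emph{Reduction.} The inclusion $a_m\le Q_m$ is immediate from $a\le Q$. Conversely, suppose that $(a:Q)\not\le m$; since $m$ is maximal this forces $(a:Q)\vee m=1$. Because $1$ is compact in the $C$-lattice $L$, I can find compact elements $s\le(a:Q)$ and $t\le m$ with $s\vee t=1$, so $s\not\le m$. Now $m_m$ is the unique maximal element of the local $C$-lattice $L_m$, and a compact element not lying under $m$ localizes to the top of $L_m$; hence $s_m=1$. Since $sQ\le(a:Q)Q\le a$, localizing yields $Q_m=s_m Q_m=(sQ)_m\le a_m$, completing the reduction.

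\emph{Proof that $(a:Q)\not\le m$.} Suppose for contradiction $(a:Q)\le m$, so $(a:Q)\vee m=m$. From the definition of $(a:Q)$ and the hypothesis we get at no cost
\[
Q\bigl((a:Q)\vee m\bigr)=(a:Q)Q\vee mQ\le a\vee mQ=Q=Q\cdot 1.
\]
If I can produce the reverse inequality $Q\le(a:Q)Q\vee mQ$—equivalently, using $Q=a\vee mQ$, the inclusion $a\le(a:Q)Q\vee mQ$—then $Q\cdot((a:Q)\vee m)=Q\cdot 1$, and cancellation of $Q$ forces $(a:Q)\vee m=1$, contradicting $(a:Q)\le m$.

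\emph{Main obstacle.} The delicate heart of the argument is establishing $a\le(a:Q)Q\vee mQ$. This is where the r-lattice hypothesis and Lemma \ref{1} must carry the weight: I would write $a=\bigvee_j y_j$ as a join of principal elements $y_j\le Q=a\vee mQ$, pass to a finite sub-join of principal generators of $a$ and of $mQ$ by compactness of $y_j$, and then iteratively apply Lemma \ref{1} to peel off the principal contributions of $mQ$. This yields $y_j\le y_j'\vee mQ$ with $y_j'$ a principal multiple of one of the chosen generators. The subtle step is ensuring that the coefficient produced by Lemma \ref{1} lies in $(a:Q)$, so that $y_j'$ is absorbed into $(a:Q)Q$; this is where the cancellation property of $Q$ must be invoked a second time, and it is the step I expect to be the most technical.
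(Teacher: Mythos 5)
Your reduction to the statement $(a:Q)\not\le m$ is where the argument breaks: that statement is strictly stronger than $Q_m=a_m$ and is actually \emph{false} under the hypotheses of the lemma, so the step you flag as ``the most technical'' cannot be carried out by any amount of work. Concretely, let $D$ be an almost Dedekind domain with a maximal ideal $M$ that is not invertible (equivalently, not finitely generated; such domains exist), so that $MM^{-1}=M$. Then $M$ is locally principal and regular, hence a cancellation ideal. Pick $t\in M$ with $tD_M=MD_M$; checking locally gives $M=(t)+M^2$, so $Q=M$, $a=(t)$, $m=M$ satisfy $Q=a\vee mQ$. Yet $((t):M)\subseteq M$: if $sM\subseteq (t)$ then $s/t\in M^{-1}$, whence $s=(s/t)\,t\in M^{-1}M=M$. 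Equivalently, your key inequality $a\le (a:Q)Q\vee mQ$ fails here, since $((t):M)M+M^2=M^2$ and $t\notin M^2$. The underlying issue is that $(a:Q)$ does not localize when $Q$ is not compact: no single compact piece of $(a:Q)$ need escape $m$ even though $Q_m=a_m$.

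The repair is to quantify over principal elements $b\le Q$ one at a time instead of working with $Q$ itself. Multiplying $Q=a\vee mQ$ by such a $b$ gives $Qb=ab\vee mQb\le Q(a\vee mb)$, and cancelling $Q$ yields $b\le a\vee mb$; join-principality of $b$ then turns this into $(a:b)\vee m=((mb\vee a):b)=1$, so $(a:b)_m=1$ and $b_m\le a_m$. Taking the join over all principal $b\le Q$ (which generate $Q$ in an r-lattice) gives $Q_m\le a_m$, and $a\le Q$ gives the reverse inequality. Note that each individual $(a:b)$ escapes $m$ while their meet $(a:Q)$ need not --- this is exactly the finiteness obstruction that defeats your uniform approach.
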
 
\begin{proof} 
Let $b\leq Q$ be a principal element. 
As $Q$ is a cancellation element and $b$ is principal, we get successively  
$$Qb=ab\vee mQb\leq Q(a\vee mb),\  b\leq a\vee mb,\   1=(a:b) \vee m,\   (a:b)_m=1, \ b_m\leq a_m, $$ thus $Q_m = a_m$.
\end{proof}

\begin{lemma} \label{6} 
Let $L$ be an r-lattice,  $Q\in L$   a  cancellation element and  $m\geq Q$ a maximal element. Then $Q_m$ is regular in $L_m$.
\end{lemma}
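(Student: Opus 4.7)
The aim is to prove $(0_m:Q_m)=0_m$ in $L_m$. Since $L_m$ is generated by images of principal elements of $L$, it suffices to show that any principal $x\in L$ satisfying $xQ_m\leq 0_m$ in $L_m$ must have $x\leq 0_m$. Using $(xQ_m)_m=(xQ)_m$ and the $s=1$ term in the defining join, the hypothesis reduces to $xQ\leq 0_m$ in $L$.

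The cancellation property of $Q$ provides the useful identity $(0:bQ)=(0:b)$ for every $b\in L$: $c\cdot bQ=0=Q\cdot 0$ forces $cb=0$ by cancellation, and the converse is trivial. Set $T=\{c\in L \text{ compact}:c\not\leq m\}$; this set is multiplicatively closed, so the family $\{(0:t)\}_{t\in T}$ is directed (indeed $(0:s)\vee(0:s')\leq(0:ss')$). For each principal $q\leq Q$, the element $xq$ is principal (hence compact), and $xq\leq xQ\leq 0_m=\bigvee_{t\in T}(0:t)$; compactness together with directedness yields some $t_q\in T$ with $t_q\cdot xq=0$.

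To finish, I would combine the pointwise annihilators $t_q$ into a single $t\in T$ annihilating $xQ$. This uses compactness of $Q$: if $Q=q_1\vee\cdots\vee q_n$ is a finite join of principals, set $t=t_{q_1}\cdots t_{q_n}\in T$; then $txQ=\bigvee_i txq_i=0$, and reading this as $Q\cdot(tx)=Q\cdot 0$ and cancelling gives $tx=0$, whence $x\leq(0:t)\leq 0_m$. The main obstacle is therefore the compactness of $Q$: without it, combining infinitely many $t_q$'s while staying in $T$ is not possible. I expect compactness of a cancellation element to be supplied by one of the preceding lemmas (mirroring the ring-theoretic fact that cancellation ideals are finitely generated); without such a fact the argument breaks precisely at this last step, which is why the whole proof hinges on it.
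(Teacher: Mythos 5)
You have correctly located the weak point yourself, and unfortunately it is fatal as the argument stands: the final step requires $Q$ to be compact, and this is neither assumed in the lemma nor supplied by the preceding lemmas, nor is it true in general. Already at the level of rings, a non-finitely-generated maximal ideal of a non-Noetherian almost Dedekind domain is locally principal and regular, hence a cancellation ideal by the Anderson--Roitman theorem, yet not finitely generated; so ``cancellation implies compact'' is false and the strategy of producing a single $t\in T$ with $txQ=0$ cannot be pushed through.

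The good news is that everything before that point is sound, and the proof can be finished from exactly where you stopped by routing through $m$ instead of through a global annihilator; this is in essence what the paper does. From $t_qxq=0$ with $t_q\not\leq m$ and $m$ maximal you get $t_q\vee m=1$, hence
$$xq=(t_q\vee m)xq=t_qxq\vee mxq=mxq$$
for every principal $q\leq Q$. Taking the join over all such $q$ (no finiteness needed) gives $xQ=mxQ=(mx)Q$, and cancelling $Q$ yields $x=mx$. (The paper obtains $xQ=mxQ$ even more quickly by checking it at every maximal element $n$: for $n\neq m$ one has $m_n=1$, and at $n=m$ both sides localize to $0_m$.) Now invoke the join-principality of $x$, which your argument never used: $m\vee(0:x)=(mx:x)=(x:x)=1$, so $(0:x)\not\leq m$, hence $(0:x)_m=1=(0_m:x_m)$ and therefore $x_m=0_m$, which is what was required. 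So the correct currency for this lemma is the principality of the test element $x$, not the compactness of $Q$.
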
 
\begin{proof} 
Let $c\in L$ be a principal element with $cQ_m=0$. We can check locally that $cQ=mcQ$, so $c=mc$ (by cancelling $Q$), hence $1=m\vee (0:c)$, so $(0_m:c_m)=1$, thus $c_m=0$.
\end{proof}

\begin{lemma} \label{3} 
Let $L$ be an r-lattice,  $Q\in L$  a  cancellation element
and  $m\geq Q$ a maximal element. If $G$ is a set of principal elements generating $L$, there exists a  subset $B$ of $G$ such that $Q= \bigvee B \vee mQ$ and $B$ is minimal with this property, i.e.
$Q \neq \bigvee (B-\{b\}) \vee mQ$ for each $b\in B$.
\end{lemma}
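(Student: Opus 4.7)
The natural approach is a Zorn's lemma argument applied to an appropriate poset of subsets. Set $B_{0} := \{g \in G : g \leq Q\}$; since $G$ generates $L$, we have $Q = \bigvee B_{0}$, hence $Q = \bigvee B_{0} \vee mQ$, so the family $\mathcal{F} := \{B \subseteq G : Q = \bigvee B \vee mQ\}$ is nonempty. Note that every $B \in \mathcal{F}$ is automatically contained in $B_{0}$, since each $b \in B$ satisfies $b \leq \bigvee B \vee mQ = Q$. Order $\mathcal{F}$ by inclusion; the desired set $B$ will be a minimal element of $\mathcal{F}$.

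By the dual (minimal) form of Zorn's lemma, it suffices to show that every descending chain $(B_{\alpha})_{\alpha \in I}$ in $\mathcal{F}$ admits a lower bound in $\mathcal{F}$. The natural candidate is $B^{*} := \bigcap_{\alpha} B_{\alpha}$; since $G$ generates $L$, verifying $B^{*} \in \mathcal{F}$ reduces to proving $g \leq \bigvee B^{*} \vee mQ$ for each $g \in B_{0}$.

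Fix such a $g$. Compactness of the principal element $g$, applied to $g \leq Q = \bigvee B_{\alpha} \vee mQ$, yields for each $\alpha$ a finite subset $F_{\alpha} \subseteq B_{\alpha}$ with $g \leq \bigvee F_{\alpha} \vee mQ$. The aim is to produce such a finite witness lying inside $B^{*}$: given $f \in F_{\alpha} \setminus B^{*}$, the chain structure supplies an index $\beta$ with $B_{\beta} \subsetneq B_{\alpha}$ and $f \notin B_{\beta}$, and one can swap $f$ out of the witness for $g$ by inserting a finite witness for $g$ against $B_{\beta}$. Iterating this replacement shrinks the portion of the witness lying outside $B^{*}$. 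Making this transfinite replacement rigorous---in particular, handling limit stages of the chain---is the main technical obstacle, and is where one must exploit the r-lattice structure (modularity and principal generators) and the full force of compactness more carefully.

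Once the chain-limit step is secured, Zorn produces a minimal $B \in \mathcal{F}$. Minimality in the inclusion order automatically yields the irredundancy condition stated in the lemma: if some $b \in B$ satisfied $Q = \bigvee(B - \{b\}) \vee mQ$, then $B - \{b\}$ would be a strictly smaller element of $\mathcal{F}$, contradicting the minimality of $B$. Hence $B$ has the required property.
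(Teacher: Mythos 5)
The gap you flag yourself is fatal, not merely technical: the poset $(\mathcal{F},\supseteq)$ is in general \emph{not} inductively ordered, so the downward form of Zorn's lemma simply does not apply, and no amount of ``transfinite replacement'' will repair this. Concretely, take $L$ to be the ideal lattice of $\mathbb{Z}$, $Q=m=2\mathbb{Z}$ (so $Q$ is a cancellation element and $m\geq Q$ is maximal), and $mQ=4\mathbb{Z}$. The sets $B_n=\{\,2p\mathbb{Z} : p \text{ an odd prime},\ p>n\,\}$ form a descending chain in your $\mathcal{F}$, since $\gcd(\{2p: p>n\}\cup\{4\})=2$; but $\bigcap_n B_n=\emptyset$, and any lower bound of the chain in $\mathcal{F}$ would have to be contained in every $B_n$, hence empty, hence not in $\mathcal{F}$. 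So this chain has no lower bound at all in $\mathcal{F}$, even though minimal elements of $\mathcal{F}$ do exist (e.g.\ $\{2\mathbb{Z}\}$). Your argument never uses that $m$ is maximal, nor Lemma \ref{1}, nor (except to get $Q\neq mQ$) that $Q$ is a cancellation element --- a sign that the real content of the lemma is being bypassed.

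The paper runs Zorn's lemma \emph{upward} on a different poset: the collection $\Gamma$ of subsets $C\subseteq G$ with $\bigvee C\leq Q$ that are \emph{irredundant modulo $mQ$}, i.e.\ $c\not\leq \bigvee(C-\{c\})\vee mQ$ for each $c\in C$. Here the chain condition does hold, precisely by the compactness argument you invoke: if $y$ were redundant in the union of a chain, compactness of $y$ would make it redundant in some member of the chain. Zorn then gives a \emph{maximal} irredundant set $B$, and the substantive step is to show $Q=\bigvee B\vee mQ$: if some principal $c\leq Q$ lay outside $\bigvee B\vee mQ$, maximality of $B$ forces some $b_0\in B$ to become redundant in $B\cup\{c\}$, and an exchange argument using compactness, Lemma \ref{1} ($b_0\vee d=xc\vee d$), and the maximality of $m$ ($x\not\leq m\Rightarrow x\vee m=1$) yields $c\leq \bigvee B\vee mQ$, a contradiction. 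You would need to replace your minimality-via-Zorn scheme with something of this kind.
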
 
\begin{proof} 

The set $\Gamma$  of all subsets $C$ of $G$ such that 
$$ \bigvee C\leq Q \ \mbox{ and } \  
c\not\leq  \bigvee (C-\{c\}) \vee mQ \mbox{ for each }\  c\in C$$ 
is nonempty because $Q\neq mQ$. 

We claim that $\Gamma$ is inductively ordered under inclusion. Indeed let $\Lambda$ be a chain in $\Gamma$, $C=\bigcup \Lambda$ and $y\in C$.
Suppose that $y\leq \bigvee (C-\{y\}) \vee mQ$. As $y$ is compact and $\Lambda$ is a chain, we get 
$y\leq \bigvee (D-\{y\}) \vee mQ$ for some $D\in\Lambda$ 
containing $y$, a contradiction. Thus $\Gamma$ is inductively ordered.
By Zorn's Lemma, 
$\Gamma$ has a maximal element $B$. We show that $Q= \bigvee B \vee mQ$.
Deny. Pick a principal element $c\in G$, $c\leq Q$ such that $c\not\leq \bigvee B \vee mQ$.
As $B$ is maximal in $\Gamma$, we get 
$$b_0\leq  \bigvee (B-\{b_0\})\vee c \vee mQ \ \mbox{ for some }\  b_0\in B.$$
As $b_0$ is compact, we get $b_0 \leq  c\vee d_1\vee \cdots 
\vee d_n \vee mQ$ for some elements $d_1,...,d_n$ of $B-\{b_0\}$. Set 
$d=d_1\vee \cdots \vee d_n \vee mQ$. By Lemma \ref{1}, $b_0\vee d = xc\vee d$ for some $x\in L$. Since  $b_0\not\leq  \bigvee (B-\{b_0\}) \vee mQ$, we get $x\not\leq m$, so $x\vee m =1$. Then $c=xc\vee mc 
\leq b_0\vee d\vee mQ\leq \bigvee B  \vee mQ$, a contradiction.
\end{proof}

\begin{lemma} \label{4} 
Let $L$ be an r-lattice having property delta  with $\Delta\subseteq L$   as in Definition  \ref{10}. 
Let $Q\in L$ be  a  cancellation element such that 
$Q=x\vee y \vee a$ where $x,y\in \Delta$  and $a\in L$.
Suppose that $m(x\vee y) \leq a$ for some  maximal element $m\in L$.
Then $Q = x\vee a$ or $Q = y\vee a$.
\end{lemma}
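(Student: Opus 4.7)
The plan is to turn the disjunctive conclusion into a single containment involving the delta element, then handle cases by how $x$ and $y$ sit relative to $m$.

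Using cancellation of $Q$, the assertion $y\leq x\vee a$ is equivalent to $Qy\leq Q(x\vee a)$; expanding $Q(x\vee a)=x^2\vee xy\vee aQ$ and $Qy=xy\vee y^2\vee ya$, and noting that $xy$ and $ya$ already lie in $Q(x\vee a)$, this reduces to $y^2\leq x^2\vee xy\vee aQ$. Symmetrically, $x\leq y\vee a$ reduces to $x^2\leq y^2\vee xy\vee aQ$. Picking $\delta\in\Delta$ with $x^2\vee y^2=x^2\vee\delta=y^2\vee\delta$, so that $y^2\leq x^2\vee\delta$ and $x^2\leq y^2\vee\delta$, it then suffices to show $\delta\leq x^2\vee xy\vee aQ$ or $\delta\leq y^2\vee xy\vee aQ$.

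Next I dispose of easy cases. If $x\leq a$ then $Q=y\vee a$, and symmetrically for $y$. If $x\vee m=1$, then using $my\leq a$, the identity $y=y(x\vee m)=xy\vee my\leq xy\vee a\leq x\vee a$ gives $Q=x\vee a$; symmetrically when $y\vee m=1$. This leaves the case $x,y\leq m$ with $x,y\not\leq a$. Then $x^2\leq mx\leq a$, $y^2\leq my\leq a$, $xy\leq my\leq a$, hence $\delta\leq x^2\vee y^2\leq a$ and $Q^2\leq a$. Also $m\delta\leq m(x^2\vee y^2)\leq(mx)x\vee(my)y\leq a(x\vee y)\leq aQ$, so $(aQ:\delta)\geq m$, and by maximality of $m$ this quotient equals $m$ or $1$.

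If $(aQ:\delta)=1$ then $\delta\leq aQ$, which is contained in both targets, and we are done. In the remaining hard sub-case $(aQ:\delta)=m$, I would combine the principal-element calculus of Lemma \ref{1} applied to the containments $y^2\leq x^2\vee\delta$ and $x^2\leq y^2\vee\delta$ (all three of $x^2,y^2,\delta$ being principal, so Lemma \ref{1} furnishes explicit multipliers) with cancellation of $Q$ to force $\delta$ into one of the two asymmetric targets. The main obstacle is exactly this sub-case: naive cancellation applied to $Q\delta\leq Q\cdot a=aQ$ only recovers the known $\delta\leq a$, so the argument must draw on the full strength of $x,y,\delta\in\Delta$ together with $Q$-cancellation to pick out the correct asymmetric target.
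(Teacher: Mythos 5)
Your reduction is sound and in fact lands exactly on the paper's own targets: writing $b=xy\vee xa\vee ya\vee a^2$, one has $Q(x\vee a)=x^2\vee xy\vee aQ=x^2\vee b$ and $Q(y\vee a)=y^2\vee b$, so by cancellation and the choice of $\delta$ the lemma reduces to showing $\delta\leq x^2\vee b$ or $\delta\leq y^2\vee b$. Your easy cases are also correct and legitimately reduce matters to $x,y\leq m$. But the argument stops precisely where the real work begins: in the sub-case $(aQ:\delta)=m$ you offer only the intention to ``combine Lemma~\ref{1} with cancellation,'' with no indication of which products of $Q$ to compare or why the outcome should favour one of the two asymmetric targets. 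As you yourself observe, cancelling $Q$ from $Q\delta\leq Qa$ returns only the already-known $\delta\leq a$, which is weaker than $\delta\leq x^2\vee b$ or $\delta\leq y^2\vee b$ (note $aQ$ may be much smaller than $a$). This is a genuine gap, not a routine verification.

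The missing idea is a second, less obvious application of cancellation one degree higher. Set $j=\delta\vee b$. Then $j\leq Q^2$, and conversely $Q^3\leq jQ$: every generator of $Q^3$ other than $x^3$ and $y^3$ visibly lies in $bQ$, while $x^3=x\cdot x^2\leq x(y^2\vee\delta)=xy^2\vee x\delta\leq jQ$, and symmetrically for $y^3$. Hence $Q^3=jQ$, and cancelling $Q$ yields the key identity $Q^2=\delta\vee b$. In particular $x^2\leq\delta\vee b$, so Lemma~\ref{1} gives $x^2\vee b=c\delta\vee b$ for some $c\in L$; combining this with $m\delta\leq mx^2\vee my^2\leq xa\vee ya\leq b$, the dichotomy $c\leq m$ versus $c\vee m=1$ gives $x^2\leq m\delta\vee b\leq b$ (hence $\delta\leq x^2\vee y^2\leq y^2\vee b$) in the first case, and $\delta=c\delta\vee m\delta\leq x^2\vee b$ in the second --- exactly the disjunction your reduction requires. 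With this in hand your preliminary split into the cases $x\vee m=1$, $y\vee m=1$, $x,y\leq m$ becomes unnecessary, since the argument is uniform. Without the identity $Q^2=\delta\vee b$ (or some substitute giving $x^2\leq\delta\vee b$), I do not see how your remaining sub-case closes.
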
 
\begin{proof} 
By property delta, there exists $\delta\in \Delta$ such that $x^2\vee \delta = y^2\vee \delta = x^2\vee y^2.$
Set $b=xy \vee xa \vee ya \vee a^2$ and $j=\delta\vee b$.
Note that $j\leq Q^2$, so $jQ\leq Q^3$.
We have 
$$ Q^3 = \bigvee \{x^3, y^3, a^3, x^2y, xy^2, x^2a, 
xa^2, y^2a, ya^2, xya\}
$$
so $Q^3\leq jQ$ 
because $x^3\leq (\delta\vee y^2)x =\delta x\vee xy^2\leq jQ$ and similarly 
$y^3\leq jQ$. Then $Q^3=jQ$, so cancel $Q$ to get $Q^2=j$.
From $x^2\leq \delta \vee b$ we get $x^2\vee b = c\delta \vee b$ for some $c\in L$ (cf. Lemma \ref{1}). 
Note that $ m\delta  \leq mx^2 \vee my^2 
\leq xa \vee ya\leq b$.

If $c\leq m$, we have
$$x^2 \leq c\delta \vee b \leq m\delta \vee b  =  b \leq (y\vee a)Q$$
hence $Q^2=(y\vee a)Q$, so cancel $Q$ to get $Q=y\vee a$.

If $c\not \leq m$ hence $m\vee c =1$,
we have
$$ y^2 \leq   x^2\vee \delta \leq x^2 \vee m\delta  \vee c\delta \leq x^2\vee b\leq (x\vee a)Q
$$ 
hence  $Q^2=(x\vee a)Q$, so cancel $Q$ to get $Q=x\vee a$. 
\end{proof}

{\em Proof of Theorem \ref{7}.}
$(\Leftarrow)$
This direction works without property delta.
Let $b,c\in L$ such that $Qb=Qc$. It follows that  $b=c$ locally  (hence globally) because any principal regular element is a cancellation element. 

$(\Rightarrow)$ 
Let $\Delta\subseteq L$ be as in Definition  \ref{10} and $m$ a maximal element.
We may assume that $m \geq Q$. By Lemma \ref{3}, there exists a minimal  set $B\subseteq \Delta$   such that $Q= \bigvee B \vee mQ$. 
If $B$ has at least two elements $x,y$, apply Lemma \ref{4} for 
$a=\bigvee (B-\{x,y\})\vee mQ$ to get a contradiction.
So $B$  has only one element.
Then Lemmas \ref{2} and \ref{6} show that $Q_m$ is principal and regular. \ \ \ \ \ \ \ \ \ \ \ \ \ \ \ \ \ 
$\square$
\\[2mm]

We give a few consequences of Theorem \ref{7}.

\begin{corollary}
Let $L$ be an r-lattice having property delta and   $Q\in L$
   a   cancellation element. 

$(i)$ If $Q$ is compact, then  $Q$ is a principal regular element. 

$(ii)$ If $a,b\in L$, then $Q(a\wedge b) = Qa \wedge Qb.$

\end{corollary}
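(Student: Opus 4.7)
Both parts reduce, via Theorem \ref{7}, to a local computation at each maximal element.

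For (i), Theorem \ref{7} gives that $Q_m$ is a principal regular element of $L_m$ for every maximal $m$. Since $Q$ is compact, the criterion recalled in the preliminaries (``a compact element $x$ is principal iff $x_m$ is principal in $L_m$ for each maximal $m$'') immediately yields that $Q$ itself is principal. For regularity, compactness of $Q$ allows me to apply $(a:b)_S=(a_S:b_S)$ with $b=Q$, giving $(0:Q)_m = (0_m : Q_m) = 0_m$ at every maximal $m$; since an element equals $0$ iff it is $0$ at every maximal element, $(0:Q)=0$. Thus $Q$ is principal and regular.

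For (ii), the inequality $Q(a\wedge b) \leq Qa\wedge Qb$ is automatic, so I need only establish the reverse. I would check the desired equality at each maximal element $m$. By Theorem \ref{7}, $Q_m$ is principal (hence meet principal) and regular (hence a cancellation element) in $L_m$. The meet principal identity applied to $Q_m a_m$ and $b_m$ gives
$$Q_m a_m \wedge Q_m b_m = \bigl((Q_m a_m : Q_m) \wedge b_m\bigr)\, Q_m,$$
and cancellation by $Q_m$ collapses $(Q_m a_m : Q_m)$ to $a_m$. So in $L_m$ one has $Q_m a_m \wedge Q_m b_m = Q_m(a_m\wedge b_m)$. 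Translating this identity back to the statement $(Qa\wedge Qb)_m = (Q(a\wedge b))_m$ and using that equalities in $L$ can be checked at every maximal element finishes the proof.

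The only nontrivial point is the compatibility of the localization map $a\mapsto a_m$ with finite meets, which I would record as a short separate observation. In the r-lattice setting this is routine: if $c$ is compact with $c\leq x_m\wedge y_m$, pick compact $s,t\not\leq m$ with $cs\leq x$ and $ct\leq y$; then $cst\leq x\wedge y$, and $st\not\leq m$ since $m$ is prime, so $c\leq (x\wedge y)_m$. Because every element is a join of compact elements, this gives $(x\wedge y)_m = x_m\wedge y_m$, which is exactly what is needed to bridge the local and global identities in part~(ii).
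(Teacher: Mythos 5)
Your proof is correct and takes essentially the same approach as the paper: both parts reduce to the local situation via Theorem \ref{7}, where $Q_m$ is principal and regular. The paper's version is merely terser --- it cites the identity $Qa\wedge Qb=Q(a\wedge b)$ for principal regular elements as known, whereas you derive it from meet-principality plus cancellation and also spell out the regularity check in (i) and the compatibility of localization with finite meets.
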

\begin{proof}
$(i)$ By Theorem \ref{7}, $Q$ is locally principal hence principal as $Q$ is compact.
$(ii)$ Checking locally, we may assume that $Q$ is a principal regular element. In this case the assertion is known.
\end{proof}
Note that ideal $A$ in \cite[Exercise 10, page 456]{G} is a finitely generated cancellation ideal which   consists of zero-divisors.
The next corollary extends \cite[Corollary 1]{AR}.

\begin{corollary}\label{15}
Let $L$ be an r-lattice having property delta and 
$S$  a multiplicatively closed set of compact elements in $L$.
Then $Q$ is a  cancellation element of $L_S$ 
iff $Q_p$ is a principal regular element of $L_p$ for each maximal element $p$ of the set 
$$T=\{ q \mbox{ prime element of } L\ |\  s\not\leq q 
\mbox{ for each } s\in S \}.$$
 In particular, if $Q$ is a  cancellation element of $L$, then  $Q_S$ is a  cancellation element of $L_S$
\end{corollary}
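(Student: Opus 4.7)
The plan is to apply Theorem \ref{7} directly to the localized lattice $L_S$. Concretely, I would verify that $L_S$ inherits the hypotheses of Theorem \ref{7} from $L$ and that the maximal elements of $L_S$, together with their localizations, match the data on the right-hand side of the statement; the final ``in particular'' assertion then follows by combining this equivalence with Theorem \ref{7} applied to $L$ itself.

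First I would check that $L_S$ is again an r-lattice having property delta. Since $L$ is a C-lattice, so is $L_S$, and the images $\{x_S \mid x\in \Delta\}$ of the generating set from Definition \ref{10} generate $L_S$; each $x_S$ is principal in $L_S$ by the standard behavior of principal elements under C-lattice localization, so $L_S$ is an r-lattice. For $x,y\in \Delta$, choosing $\delta\in \Delta$ with $x^2\vee y^2 = x^2\vee \delta = y^2\vee \delta$ in $L$ and localizing these identities at $S$ yields the corresponding equalities in $L_S$, so $\{x_S \mid x\in \Delta\}$ witnesses property delta for $L_S$. Next I would invoke the standard dictionary between primes: the map $p\mapsto p_S$ is a bijection from $T$ onto the set of prime elements of $L_S$, and this bijection carries maximal elements of $T$ onto maximal elements of $L_S$; moreover $(L_S)_{p_S}$ coincides with $L_p$ and for any $a\in L$ the two iterated localizations satisfy $(a_S)_{p_S} = a_p$. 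Applying Theorem \ref{7} to $L_S$ and to the element $Q\in L_S$ then gives: $Q$ is a cancellation element of $L_S$ iff $Q_{p_S} = Q_p$ is principal regular in $(L_S)_{p_S} = L_p$ for each maximal element $p$ of $T$, which is exactly the claimed criterion.

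For the ``in particular'' part, assume that $Q$ is a cancellation element of $L$. By Theorem \ref{7}, $Q_n$ is a principal regular element of $L_n$ for every maximal element $n$ of $L$. Given a maximal element $p$ of $T$, choose a maximal element $n\geq p$ of $L$; then $L_p = (L_n)_{p_n}$ and $Q_p = (Q_n)_{p_n}$, and principal regularity of $Q_n$ passes to $(Q_n)_{p_n}$ because principal elements localize to principal elements and, using the identity $(0:c)_p = (0_p:c_p)$ for compact $c$, regularity of a principal element is preserved under localization. Hence $Q_p$ is principal regular in $L_p$ for each maximal $p\in T$, and the equivalence established above shows that $Q_S$ is a cancellation element of $L_S$. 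The main obstacle I expect is the careful bookkeeping in transferring the r-lattice/C-lattice structure and the delta condition to $L_S$, and in identifying $(L_S)_{p_S}$ with $L_p$; once these standard facts are in hand, everything reduces to Theorem \ref{7}.
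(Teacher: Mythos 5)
Your proposal is correct and follows essentially the same route as the paper: verify that $L_S$ inherits the r-lattice structure and property delta, identify the maximal elements of $L_S$ with the maximal elements of $T$ and $(L_S)_p$ with $L_p$, and then apply Theorem \ref{7} to $L_S$ (and to $L$ itself for the final assertion). No gaps.
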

\begin{proof} 
It is easy to check  that $L_S$ is still an r-lattice having property delta.
Also, it is well-known that $T$ is the spectrum of $L_S$, so its
 maximal elements are precisely the maximal elements of $L_S$ (see \cite[Theorem 2.2]{A} and \cite[page 203]{JJ}).
If $p\in T$ and  $m\geq p$ is a maximal element of $L$, then $(L_S)_p=L_p$ and  $(Q_m)_p=Q_p=(Q_S)_p$ is a principal regular element in $L_p$ if so is $Q_m$ in $L_m$. Apply Theorem \ref{7} to complete.
\end{proof}

Let $D$ be an integral domain with quotient field $K$. We briefly recall the definition of the so called {\em $w$-operation} on $D$ 
(for   details see \cite{AC}, \cite[Sections 32 and 34]{G} and  \cite{WM}). Let $W$ be the set of nonzero finitely generated  ideals $H$ of $D$ such that 
$H^{-1}=D$ where $H^{-1}:=\{x\in K\ |\ xH\subseteq D\}$. For an ideal $Q$ of $D$ define its {\em $w$-closure} by $$Q_w=\cup \{(Q:H)\ |\ H\in W\}$$ and call $Q$ a {\em $w$-ideal} if $Q=Q_w$. Say that  a $w$-ideal $Q$ a 
{\em $w$-cancellation} ideal if 
$$(QJ)_w =(QN)_w \mbox{ with } J,N \mbox{ ideals implies } J_w = N_w.$$ 
The set $\mathcal{L}_w(D)$ of all $w$-ideals of $D$ is 
an r-lattice  under  the operations: 
$$\bigvee A_\alpha = (\sum A_\alpha)_w,\  \bigwedge A_\alpha = \cap A_\alpha,\  A\cdot B=(AB)_w$$
 cf. \cite[Theorems 3.1 and 3.5]{AC}. The maximal elements of $\mathcal{L}_w(D)$ are called maximal $w$-ideals.
Moreover $\mathcal{L}_w(D)$ has property delta 
since  $(x^2,y^2)_w= (x^2,x^2+y^2)_w= (y^2,x^2+y^2)_w$ for all $x,y\in D$.
By Theorem \ref{7} we get:

\begin{corollary}\label{16}
Let $D$ be an integral domain. A nonzero w-ideal $Q$ of $D$ is a w-cancellation ideal iff $QD_M$ is  principal  for each maximal $w$-ideal $M$ of $D$.
\end{corollary}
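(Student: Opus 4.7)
The plan is to apply Theorem \ref{7} to the r-lattice $L=\mathcal{L}_w(D)$, since the paragraph preceding the corollary already supplies both ingredients: $\mathcal{L}_w(D)$ is an r-lattice (by \cite[Theorems 3.1 and 3.5]{AC}) and it satisfies property delta via the set $\Delta=\{(x)_w\mid x\in D\}$ of principal $w$-ideals, with witnessing identity $(x^2,y^2)_w=(x^2,x^2+y^2)_w=(y^2,x^2+y^2)_w$. The only remaining work is translating the lattice-theoretic conclusion of Theorem \ref{7} into the ring-theoretic statement of the corollary.

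First I would record the tautological matching of concepts: under the product $A\cdot B=(AB)_w$, a nonzero $w$-ideal $Q$ is a $w$-cancellation ideal exactly when it is a cancellation element of $\mathcal{L}_w(D)$, so the hypothesis side of the equivalence is already identified with the hypothesis of Theorem \ref{7}. Next I would localize at a maximal $w$-ideal $M$ of $D$: I need to identify the lattice localization $(\mathcal{L}_w(D))_M$ with the ideal lattice of $D_M$, and the localized element $Q_M$ with the extended ideal $QD_M$. This compatibility between $w$-localization and ring localization at a maximal $w$-ideal is a standard fact in the star-operation literature (see \cite{AC}, \cite{WM}): $D_M$ is quasi-local, so its $w$-operation is trivial, and finitely generated $w$-ideals extend to ordinary finitely generated ideals of $D_M$, matching the lattice formula $Q_M=\bigvee\{(Q:s)\mid s\in\Delta,\ s\not\leq M\}$.

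Finally I would eliminate the regularity condition. Because $D$ is an integral domain, so is the localization $D_M$; every nonzero element of $D_M$ is thus a non-zero-divisor, so a principal ideal of $D_M$ is regular iff it is nonzero. Since $D$ is a domain and $Q$ is nonzero, $QD_M\neq 0$ for every maximal $w$-ideal $M$. Hence ``$Q_M$ is a principal regular element of $(\mathcal{L}_w(D))_M$'' reduces to ``$QD_M$ is a principal ideal of $D_M$'', and Theorem \ref{7} delivers the stated equivalence.

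The only genuine content beyond Theorem \ref{7} is the bridge in the second paragraph: the assertion that the abstract localization of $\mathcal{L}_w(D)$ at a maximal $w$-ideal agrees with the concrete ring localization $D_M$. I expect this to be the main (albeit routine) obstacle; it will be handled by a direct citation to the references on the $w$-operation rather than reproved here.
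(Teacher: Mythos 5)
Your proposal is correct and follows the paper's primary route: the paragraph preceding the corollary verifies that $\mathcal{L}_w(D)$ is an r-lattice with property delta, and the corollary is then obtained by applying Theorem \ref{7} directly, with exactly the translation steps you describe (cancellation element $=$ $w$-cancellation ideal, lattice localization at a maximal $w$-ideal $=$ ideal lattice of $D_M$, and regularity being automatic for nonzero ideals of a domain) left implicit. The paper additionally notes an alternative shortcut, viewing $\mathcal{L}_w(D)$ as the localization of $\mathcal{L}(D)$ at the multiplicative set $W$ and invoking Corollary \ref{15}, but your direct argument matches the stated derivation.
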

A  shorcut is to note that $W$ is a multiplicative set of compacts  in the ideal lattice $\mathcal{L}(D)$ of  $D$ and 
$\mathcal{L}_w(D)$ is just the localization of $\mathcal{L}(D)$ at $W$. 
So Corollary \ref{16} follows at once from Corollary \ref{15}.

\begin{remark}\label{11}
The discussion  above can be plainly extended from $w$ to star operation $*_w$ where $*$ is a star operation on $D$ (see for instance \cite{C}). Doing so we retrieve \cite[Theorem 2.3]{C}.
\end{remark}

We end this note  with the following example. The ideal lattice  $L$ of the semiring $\mathbb{N}$  is neither modular  
(cf. \cite[page 582]{AA}) nor with property delta (as the principal elements of $L$ are the principal ideals of $\mathbb{N}$ and
there is no $c\in \mathbb{N}$ such that $(4,9)=(4,c)=(c,9)$).
Nevertheless, any cancellation  ideal   of $\mathbb{N}$ is   principal. Indeed, a non-principal ideal $Q$ can be written as $(a,b,H)$ where $a < b$ are positive integers, $b$ is not a multiple of $a$ and $H$ is an ideal generated by numbers greater than $b$. Hence  $Q^3=Q(a^2,b^2,aH,bH,H^2)$,  $ab\in Q^2$ and 
$ab\notin (a^2,b^2,aH,bH,H^2)$.


\begin{thebibliography}{11111}

\bibitem{AA} F. Alarcon and D.D. Anderson,
Commutative semirings and their lattices of ideals,
Houston J. Math. 20 (1994), 571-590.

\bibitem{A} D.D. Anderson, Abstract commutative ideal theory without chain conditions, Algebra Univ. 6 (1976), 131-145.

\bibitem{AC} { D.D. Anderson and S.J. Cook,} {Two star operations and their indused latices,} 
Comm. Algebra 28 (2000), 2461-2475.

 \bibitem{AJ} D.D. Anderson and C. Jayaram, Principal element lattices,
 Czech. Math. J. 46 (1996), 99-109.

\bibitem{AJ1} D.D. Anderson and E.G. Johnson,
{Dilworth's principal elements,}
Algebra Univ. 36 (1996), 392-404.

\bibitem{AR}  D.D. Anderson and M. Roitman,
A characterization of cancellation ideals,
Proc. Amer. Math. Soc. 125 (1997), 2853-2854.


\bibitem{C} G.W. Chang, 
Characterizations of *-cancellation ideals of an integral domain, 
Comm. Algebra 37 (2009), 3309-3320.
 

\bibitem{D} R. Dilworth,
Abstract commutative ideal theory,
Pac. J. Math. 12 (1962), 481-498. 

\bibitem{G} R. Gilmer, {\em Multiplicative Ideal Theory}, Marcel Dekker, New York, 1972.


\bibitem{JJ} C. Jayaram and E.G. Johnson, 
 S-prime elements in multiplicative lattices, 
Period. Math. Hung. 31 (1995), 201–208. 

\bibitem{K} I. Kaplansky,
Topics in commutative ring theory, unpublished notes, 1971.

\bibitem{WM} { F. Wang and R. McCasland,} {On w-modules over strong Mori domains,} Comm. Algebra 25 (1997), 1285-1306.
 
\end{thebibliography}
\end{document}